\newcommand{\norm}[1]{\left\lVert#1\right\rVert}
\newcommand{\abs}[1]{\left|#1\right|}
\begin{document}


\section{Introduction}

The Restricted Isometry Property (RIP) \cite{candes2005decoding} plays a foundational role in the theory of compressed sensing (CS) and high-dimensional signal recovery. It asserts that a matrix \( \Omega \in \mathbb{R}^{M \times N} \) approximately preserves the Euclidean norm of $k$-sparse vectors $x$ of length $N$ with at most $k$ non-zero entries (in other words, the norm $\|x\|_0\leq k$), that is,
\[
(1 - \delta) \|x\|_2^2 \leq \|\Omega x\|_2^2 \leq (1 + \delta) \|x\|_2^2,
\]
for all $k$-sparse $x \in \mathbb{R}^N$ with some $\delta>0$. This property ensures uniform stability and robustness, and has led to many deep results in sparse approximation, random matrix theory, and inverse problems; see \cite{fou} for an overview.
Many RIP results focus on matrices with sub-Gaussian entries or bounded orthonormal systems. In this setting, high-probability isometry guarantees are derived using tail bounds, chaining techniques, and volumetric covering arguments \cite{vershynin}. 

As regards generalized RIP definitions, Allen-Zhu et al. \cite{allenzhu} formulated a version for general $p$-norms and obtained almost tight bounds on the minimum number of measurements $M$ necessary for the RIP property to hold with high probability, for every $1\leq p <\infty$, with random binary matrices. Far less is known about measurement ensembles with heavy tails involving, for example, symmetric $\alpha$-stable ($S\alpha S$) distributions with $0 < \alpha < 2$. Previous work by Otero and Arce \cite{otero2011generalized}, which forms the main motivation for this work, addressed the case for $\alpha$-stable matrices with $1\leq \alpha \leq 2$, and established that at least $O(k \log(N/k))$ $\alpha$-stable measurements are needed for their RIP variant to hold with high probability. Again, a lower bound on the number of uniformly and independently subsampled rows from a bounded, orthonormal matrix necessary for RIP to hold was proved in Błasiok et al. \cite{jaro}. Recently, Chen et al. \cite{chen2024} established the RIP for block diagonal random matrices with elements from a class of generalized sub-Gaussian variables, thereby extending previously known results for the sub-Gaussian case. In an important work, Haviv and Regev \cite{Haviv2017} proved that RIP holds with probability close to 1 when rows are randomly sampled from a Fourier matrix. An improved lower bound for this last problem may be found in \cite{rao2019}.

Yet, RIP is not always necessary or desirable for practical recovery algorithms. Dirksen et al. \cite{dirksen2015} show a fundamental gap between RIP and the Null Space Property (NSP), which suggests that one can uniformly recover all $k$-sparse vectors in the optimal measurement regime $O(k \log(N/k))$ only within the latter framework. This distinction is central to the theory of CS and has been discussed, for instance, in later works such as \cite{cahill}.

We first define an isometry-type condition tailored to this setting, the so-called $(\delta,k)$-\emph{Restricted Quasiconvexity Isometry Property} (RQIP). This property, stated for a matrix $\Omega$ with $i$-th row $\omega_i^T$ and all $k$-sparse vectors $x$, is that
\[
(1 - \delta) C_{\alpha,p} (\gamma \|x\|_\alpha)^p \leq \frac{1}{M} \sum_{i=1}^M |\langle \omega_i, x \rangle|^p \leq (1 + \delta) C_{\alpha,p} (\gamma \|x\|_\alpha)^p
\]
holds for small enough $\delta>0$, with some $0<p<\alpha$ and constants $\gamma, C_{\alpha,p}>0$.

In this paper, we analyze matrices $\Omega$ with independent and identically distributed (i.i.d.)\ $S\alpha S(\gamma)$ entries, with $\gamma>0$ denoting the scale parameter, in the "ultra" heavy-tailed regime $\alpha \in (0,1)$. As with results in the standard literature on CS, our main theorem, henceforth referred to as the RQIP theorem, establishes that RQIP holds for $\Omega$ with high probability if the number of measurements $M$ exceeds a threshold, albeit with this threshold depending polynomially on $N$ (it also depends, obviously, on the target $\delta$). To be precise, our principal result shows that, as long as
\[M \geq \left( \frac{2 C_{\text{con}}}{\eta} C_{\text{net}}^k \left(\frac{3}{\delta}\right)^{k/p} \left(\frac{eN}{k}\right)^k \right)^{1/c_{\text{con}}}, \]
then $\Omega$ satisfies the $(\delta, k)$-RQIP with probability at least $1-\eta$, where $C_{\text{con}}$ depends only on $\alpha,p,$ and $\gamma$, $c_{\text{con}}=c_o\left(\frac{\alpha}{p}-1\right)$ with $0<c_0<1/2$, and $C_{\text{net}}=2(2^{1/\alpha-1})^2 + 2^{1/\alpha-1}$. The proof of this result relies on two ingredients: (i) A polynomial-decay concentration inequality for empirical fractional moments of $S\alpha S$ random variables, serving as a heavy-tailed analog of sub-Gaussian concentration \cite{vershynin}, and (ii) A covering number bound for the sparse $\ell_\alpha$ ball, valid for quasi-norms with $0 < \alpha < 1$, using entropy techniques adapted from classical arguments \cite{fou, vershynin}.
We combine these results using a standard $\varepsilon$-net argument, with careful choice of $\epsilon$ depending on the target RQIP deviation parameter $\delta$.

Our work provides the first isometry-type guarantee for matrices with extremely heavy-tailed entries, extending the scope of sparse recovery theory to the case $\alpha < 1$. The polynomial dependence on $N$, of the form $(\frac{N}{k})^{k/c_{\text{con}}}$, is exponentially larger than the $O(k \log(N/k))$ measurements sufficient to establish RIP and, consequently, prove rigorous sparse recovery guarantees using sub-Gaussian and $S\alpha S$ random measurement matrices with $1\leq \alpha \leq 2$, as noted above. This reinforces the idea that RIP-style conditions may be too rigid in highly heavy-tailed regimes, motivating the development of alternate guarantees (e.g., NSP \cite{dirksen2015} or robustness frameworks) for practical recovery algorithms.

\section{Preliminaries}
We start with a formal definition of the proposed RQIP and proceed to state and prove the two key ingredients in the proof of the RQIP theorem alluded to above: concentration and covering number bounds.
\begin{definition}[Restricted Quasiconvexity Isometry Property (RQIP)]
A matrix $\Omega \in \mathbb{R}^{M \times N}$ satisfies the $(\delta, k)$-RQIP for some $\delta \in (0,1)$ if, for all $k$-sparse vectors $x \in \mathbb{R}^N$, given $\alpha\in (0,1)$ and $\gamma>0$, there exists a constant $C_{\alpha,p} > 0$ (depending on $\alpha$ and $p$) such that:
$$ (1-\delta) C_{\alpha,p} (\gamma \norm{x}_\alpha)^p \le \frac{1}{M} \sum_{i=1}^M |\omega_i^T x|^p \le (1+\delta) C_{\alpha,p} (\gamma \norm{x}_\alpha)^p $$
for some $p \in (0, \alpha)$. Here, $\omega_i^T$ denotes the $i$-th row of $\Omega$, and $\norm{x}_\alpha = \left(\sum_{j=1}^N |x_j|^\alpha\right)^{1/\alpha}$ for $0 < \alpha < 1$.
\end{definition}

Let $\Omega$ be an $M \times N$ random matrix whose entries $\omega_{ij}$ are i.i.d. random variables, each drawn from the distribution $S\alpha S(\gamma)$, where $0 < \alpha < 1$ and $\gamma > 0$ is a scale parameter. For $y_i = \omega_i^T x$, if $x \in \mathbb{R}^N$ is a fixed vector and the components $\omega_{ij}$ of the vector $\omega$ are i.i.d. $S\alpha S(\gamma)$, then $y_i$ is an $S\alpha S(\gamma \norm{x}_\alpha)$ random variable. For $p \in (0, \alpha)$, it is well-known that the $p$-th absolute moment exists and is given by $\mathbb{E}[|y_i|^p] = C_{\alpha,p} (\gamma \norm{x}_\alpha)^p$, where $C_{\alpha,p} > 0$ is a constant depending only on $\alpha$ and $p$.

The proof of the main theorem, giving a lower bound on $M$ for $\Omega$ to satisfy RQIP with high probability, relies on the following two lemmas:

\begin{lemma}[Concentration of Empirical Fractional Moments for $S\alpha S$ Variables]
\label{lem:concentration}
Let $X_1, \dots, X_M$ be i.i.d. samples from the symmetric $\alpha$-stable distribution $S\alpha S(\gamma)$ with $0 < \alpha < 1$. Fix any $p \in (0, \alpha)$ and let $Y_i := |X_i|^p$.\\
Then there exist constants $C_{\text{con}}(\alpha,p,\gamma)$ and $c_{\text{con}} = c_0(\alpha/p-1)$ with $0<c_0<1/2$ such that, for any $\varepsilon >0$, there exists $M_0(\epsilon,\alpha,p,\gamma)>0$ and for $M>M_0$, we have
\[
\mathbb{P}\left( \left| \frac{1}{M} \sum_{i=1}^M Y_i - \mathbb{E}[Y_i] \right| > \varepsilon \right) \leq C_{\text{con}} \cdot M^{-c_{\text{con}}}.
\]
\end{lemma}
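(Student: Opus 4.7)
\textbf{Proof plan for Lemma~\ref{lem:concentration}.} Set $\beta := \alpha/p > 1$. The Pareto-type tail of the $S\alpha S(\gamma)$ law, $\mathbb{P}(|X_1|>t) \sim C_\alpha \gamma^\alpha t^{-\alpha}$, transfers to $Y_1 = |X_1|^p$: there exist constants $A, t_0$ (depending on $\alpha,p,\gamma$) with $\mathbb{P}(Y_1>t) \leq A t^{-\beta}$ for $t \geq t_0$. Since $\beta > 1$ the mean $\mu := \mathbb{E}[Y_1] = C_{\alpha,p}\gamma^p$ is finite, but for $\beta < 2$ the variance is infinite, so neither Chebyshev nor sub-exponential concentration can be applied directly. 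The plan is therefore a one-level truncation argument: fix a threshold $T = T(M)$ to be chosen as a power of $M$, split $Y_i = Z_i + R_i$ with $Z_i := Y_i \mathbf{1}_{\{Y_i \leq T\}}$, $R_i := Y_i - Z_i$, and show that each contribution to the empirical mean decays at rate $M^{-c_{\text{con}}}$.

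By the triangle inequality,
\[
\left|\tfrac{1}{M}\sum_i Y_i - \mu\right|
\;\leq\;
\left|\tfrac{1}{M}\sum_i Z_i - \mathbb{E} Z_1\right|
+ \left|\mathbb{E} Z_1 - \mu\right|
+ \tfrac{1}{M}\sum_i R_i,
\]
so it suffices to control each summand by $\varepsilon/3$. The middle (deterministic) term equals $\mathbb{E}[Y_1\mathbf{1}_{\{Y_1>T\}}] = T\mathbb{P}(Y_1>T) + \int_T^\infty \mathbb{P}(Y_1>t)\,dt$, which is $O(T^{-(\beta-1)})$; choosing $M_0 = M_0(\varepsilon,\alpha,p,\gamma)$ large enough makes this less than $\varepsilon/3$ for every $M \geq M_0$. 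The third term is non-zero only on the event $\{\max_i Y_i > T\}$, whose probability is at most $AMT^{-\beta}$ by a union bound. For the first term, $\mathrm{Var}(Z_1) \leq \mathbb{E} Z_1^2 = \int_0^T 2t\,\mathbb{P}(Y_1>t)\,dt$ is $O(T^{2-\beta})$ for $\beta<2$ and bounded (or logarithmic) for $\beta\geq 2$, so Chebyshev yields a bound of order $T^{2-\beta}/(M\varepsilon^2)$ for this term exceeding $\varepsilon/3$.

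The final step is to balance the two stochastic bounds. Take $T = M^{\theta}$ with $\theta = (1+c_{\text{con}})/\beta$. The tail bound becomes exactly $A M^{-c_{\text{con}}}$, while the Chebyshev bound becomes $O(M^{\theta(2-\beta)-1}\varepsilon^{-2})$; a short computation shows this exponent is at most $-c_{\text{con}}$ whenever $c_{\text{con}} \leq \beta - 1$, a condition strictly satisfied by the choice $c_{\text{con}} = c_0(\beta - 1)$ with $c_0 < 1/2$. Summing the two probabilistic estimates and folding the $\varepsilon^{-2}$ prefactor together with all the constants into a single $C_{\text{con}}(\alpha,p,\gamma)$, while absorbing the $\varepsilon$-dependence into $M_0$, gives the stated polynomial rate.

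The main obstacle is the infinite variance in the regime $\beta \in (1,2)$: heavy tails rule out sub-exponential concentration, and one must tune the single truncation level $T$ so that three competing quantities — the probability that truncation is activated, the variance of the truncated part, and the deterministic bias — all decay at the \emph{same} polynomial rate. The slack $c_0 < 1/2$ is what permits a uniform choice of $\theta$ across the whole range $\beta > 1$ (including the transitional case $\beta \approx 2$ where $\mathrm{Var}(Z_1)$ is only logarithmic) without degrading the advertised rate in $M$.
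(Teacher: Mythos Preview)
Your truncation strategy is correct and matches the paper's high-level plan, but the two proofs differ in the concentration tools applied to each piece. The paper bounds the truncated fluctuation by \emph{Hoeffding's inequality} (using only that $Y_i^{\le T}\in[0,T]$), obtaining $2\exp\bigl(-M\varepsilon^2/(8T^2)\bigr)$, and bounds the centred tail part by \emph{Markov's inequality} applied to $\mathbb{E}[Y_i\mathbf{1}_{\{Y_i>T\}}]=O(T^{1-\beta})$. With the choice $T\propto M^{c_0}$ the Markov term becomes exactly $C\,M^{-c_{\mathrm{con}}}$, while the Hoeffding term has exponent proportional to $M^{1-2c_0}$; this is precisely where the restriction $c_0<1/2$ originates in the paper. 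Your version replaces Hoeffding by Chebyshev and replaces Markov-on-the-tail by the cruder union bound $\mathbb{P}(\max_i Y_i>T)\le A\,M\,T^{-\beta}$; this is more elementary, and your balancing $T=M^{(1+c_{\mathrm{con}})/\beta}$ still recovers the stated polynomial rate.

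One caveat: your closing remark that $c_0<1/2$ suffices uniformly over all $\beta>1$ is not quite right in the regime $\beta>3$. There $\mathrm{Var}(Z_1)\le \mathbb{E}[Y_1^2]$ is bounded independently of $T$, so Chebyshev delivers only rate $M^{-1}$, which cannot be upgraded to $C\,M^{-c_{\mathrm{con}}}$ (with $C$ independent of $\varepsilon$) once $c_{\mathrm{con}}=c_0(\beta-1)>1$; you then need $c_0<1/(\beta-1)$ rather than merely $c_0<1/2$. This does not invalidate the lemma as stated (it asserts only existence of \emph{some} $c_0\in(0,1/2)$, and you can always pick $c_0<\min\{1/2,\,1/(\beta-1)\}$), but the paper's Hoeffding route genuinely delivers every $c_0\in(0,1/2)$ for every $\beta>1$, which matters downstream since the RQIP sample-complexity bound improves as $c_{\mathrm{con}}$ increases.
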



%

\begin{proof}
Let $Y_i = |X_i|^p$, where $X_i \sim S\alpha S(\gamma)$. Since $p < \alpha$, the $p$-th absolute moment $\mathbb{E}[Y_i] = \mathbb{E}[|X_i|^p]$ is finite. Let $\mu_Y = \mathbb{E}[Y_i]$. Our goal is to bound $\mathbb{P}\left( \left| \frac{1}{M} \sum_{i=1}^M Y_i - \mu_Y \right| > \varepsilon \right)$.

We employ a truncation strategy. Let $T > 0$ be a truncation threshold, to be chosen later. We decompose $Y_i$ into a bounded part and a tail part:
\[
Y_i^{\leq T} := Y_i \cdot \mathbbm{1}_{\{Y_i \leq T\}}, \quad Y_i^{> T} := Y_i \cdot \mathbbm{1}_{\{Y_i > T\}}.
\]
The deviation can then be decomposed as:
\[
\frac{1}{M} \sum_{i=1}^M Y_i - \mu_Y = \underbrace{\left( \frac{1}{M} \sum_{i=1}^M Y_i^{\leq T} - \mathbb{E}[Y_i^{\leq T}] \right)}_{\text{(I)}} + \underbrace{\left( \frac{1}{M} \sum_{i=1}^M Y_i^{> T} -  \mathbb{E}[Y_i^{> T}]\right)}_{\text{(II)}}.
\]
By the union bound, for any $\varepsilon > 0$,
\[
\mathbb{P}\left( \left| \frac{1}{M} \sum_{i=1}^M Y_i - \mu_Y \right| > \varepsilon \right) \leq \mathbb{P}\left( |\text{(I)}| > \frac{\varepsilon}{2} \right) + \mathbb{P}\left( |\text{(II)}| > \frac{\varepsilon}{2} \right).
\]

\subsubsection*{Bounding Term (I)}
Since $Y_i^{\leq T}$ are i.i.d. and bounded in $[0, T]$, we can apply Hoeffding's inequality:
\[
\mathbb{P}\left( \left| \frac{1}{M} \sum_{i=1}^M Y_i^{\leq T} - \mathbb{E}[Y_i^{\leq T}] \right| > \frac{\varepsilon}{2} \right) \leq 2 \exp\left( -\frac{M (\varepsilon/2)^2}{2 T^2} \right) = 2 \exp\left( -\frac{M \varepsilon^2}{8 T^2} \right).
\]

\subsubsection*{Bounding Term (II)}
This involves the tail behavior of $Y_i = |X_i|^p$. The tail probabilities of the random variables $X_i \sim S\alpha S(\gamma)$ are known to satisfy: 
\[
\mathbb{P}(|X_i| > t) \leq C_\alpha \gamma^\alpha t^{-\alpha} \quad \text{as } t \to \infty,
\]
for some constant $C_\alpha > 0$. We can bound $\mathbb{E}[Y_i^{> T}]$, for large enough $T$, using the tail probability:
\begin{align*}
\mathbb{E}[Y_i^{> T}] &= \mathbb{E}[|X_i|^p \cdot \mathbbm{1}_{\{|X_i|^p > T\}}] = \mathbb{E}[|X_i|^p \cdot \mathbbm{1}_{\{|X_i| > T^{1/p}\}}] \\
&= \int_0^\infty \mathbb{P}(|X_i|^p \cdot \mathbbm{1}_{\{|X_i|^p > T\}} > u) du \\
&= \int_T^\infty \mathbb{P}(|X_i|^p > u) du = \int_T^\infty \mathbb{P}(|X_i| > u^{1/p}) du \\
&\le \int_T^\infty C_\alpha \gamma^\alpha (u^{1/p})^{-\alpha} du = C_\alpha \gamma^\alpha \int_T^\infty u^{-\alpha/p} du.
\end{align*}
Since $p < \alpha$, we have $\alpha/p > 1$, so the integral converges:
\[ C_\alpha \gamma^\alpha \left[ \frac{u^{1-\alpha/p}}{1-\alpha/p} \right]_T^\infty = C_\alpha \gamma^\alpha \left( 0 - \frac{T^{1-\alpha/p}}{1-\alpha/p} \right) = \frac{C_\alpha \gamma^\alpha}{\alpha/p-1} T^{1-\alpha/p}. \]
Let $K_{\alpha,p,\gamma} = \frac{C_\alpha \gamma^\alpha}{\alpha/p-1}$. Then,
\[
\mathbb{E}[Y_i^{> T}] \leq K_{\alpha,p,\gamma} T^{1-\alpha/p}.
\]
For term (II), we use Markov's inequality:
\[
\mathbb{P}\left( \left|\frac{1}{M} \sum_{i=1}^M Y_i^{> T}-\mathbb{E}[Y_i^{> T}]\right| > \frac{\varepsilon}{2} \right) \leq \frac{4}{\varepsilon} \mathbb{E}[Y_i^{> T}] \leq \frac{4 K_{\alpha,p,\gamma}}{\varepsilon} T^{1-\alpha/p}.
\]

\subsubsection*{Choosing the Truncation Threshold $T$}
To balance the terms and achieve a polynomial decay, we choose $T$ such that the polynomial decay rate of the second term dominates, while term (I) decays faster.
For the polynomial decay, we need to choose $T$ to be a function of $M$ in such a way that $T$ is large when $M$ is so. Let $c_{\text{con}} = c_0(\alpha/p-1)$ be as specified in the lemma. We want term (II) to be roughly proportional to $M^{-c_{\text{con}}}$; choose $T$ such that $T^{1-\alpha/p}$ is of the order $M^{-c_0(\alpha/p-1)}$.\\
To be precise, define $T := \left( C' \cdot \frac{1}{\varepsilon} \right)^{1/(\alpha/p-1)} \cdot M^{c_0}$ for some constant $C'>0$ and $0<c_0<1/2$.
Then $T^{1-\alpha/p} = \left( C' \cdot \frac{1}{\varepsilon} \right)^{-1} \cdot M^{-c_0(\alpha/p-1)} = \frac{\varepsilon}{C'} M^{-c_{\text{con}}}$.
Thus,
\[
\mathbb{P}\left( \left|\frac{1}{M} \sum_{i=1}^M Y_i^{> T}-\mathbb{E}[Y_i^{> T}]\right| > \frac{\varepsilon}{2} \right) \leq \frac{4 K_{\alpha,p,\gamma}}{\varepsilon} \cdot \frac{\varepsilon}{C'} M^{-c_{\text{con}}} = \frac{4 K_{\alpha,p,\gamma}}{C'} M^{-c_{\text{con}}}.
\]
For sufficiently large $M$, this last bound is small since $c_{\text{con}} > 0$.

\subsubsection*{Verifying Term (I) with chosen $T$}
Now substitute $T$ into the bound for term (I):
\[
\mathbb{P}\left( \left| \frac{1}{M} \sum_{i=1}^M Y_i^{\leq T} - \mathbb{E}[Y_i^{\leq T}] \right| > \frac{\varepsilon}{2} \right) \leq 2 \exp\left( -\frac{M \varepsilon^2}{8 T^2} \right).
\]
Substituting $T^2 = \left( C' \cdot \frac{1}{\varepsilon} \right)^{2/(\alpha/p-1)} \cdot M^{2c_0}$, the last bound equals
\[
 2 \exp\left( -\frac{M \varepsilon^2}{8 \left( C'/\varepsilon \right)^{2/(\alpha/p-1)} M^{2c_0}} \right) = 2 \exp\left( - \frac{\varepsilon^{2 + 2/(\alpha/p-1)}}{8 (C')^{2/(\alpha/p-1)}} M^{1-2c_0} \right).
\]
Since $0 < c_0 < 1/2$, we have $1-2c_0 > 0$. This means the exponent $M^{1-2c_0}$ grows with $M$. Therefore, the exponential term decays faster than any polynomial in $M$, but note that the decay rate reduces as $\epsilon\rightarrow 0$.

\subsubsection*{Conclusion}
Combining the bounds above:
\begin{align*}
\mathbb{P}\left( \left| \frac{1}{M} \sum_{i=1}^M Y_i - \mu_Y \right| > \varepsilon \right) &\leq 2 \exp\left( - \frac{\varepsilon^{2 + 2/(\alpha/p-1)}}{8 (C')^{2/(\alpha/p-1)}} M^{1-2c_0} \right) + \frac{4 K_{\alpha,p,\gamma}}{C'} M^{-c_{\text{con}}}.\end{align*}
We may conclude, therefore, that there exists a constant $C_{\text{con}}(\alpha,p,\gamma)$ such that, for all large $M>M_0(\epsilon,\alpha,p,\gamma)$:
\[
\mathbb{P}\left( \left| \frac{1}{M} \sum_{i=1}^M Y_i - \mathbb{E}[Y_i] \right| > \varepsilon \right) \leq C_{\text{con}} \cdot M^{-c_{\text{con}}}.
\]
The constant $C_{\text{con}}$ depends on $\alpha, p, \gamma$ (through $K_{\alpha,p,\gamma}$). The constant $c_{\text{con}} = c_0(\alpha/p-1)$ is as stated in Lemma \ref{lem:concentration}.

\end{proof}

\begin{lemma}[Covering Number for Sparse $\ell_\alpha$ Ball]
\label{lem:covering}
Let $\mathbb{B}_{\alpha,k}^N := \{x \in \mathbb{R}^N : \norm{x}_\alpha \leq 1, \norm{x}_0 \leq k\}$ denote the sparse $\ell_\alpha$ unit ball, where $\norm{x}_\alpha = \left(\sum_{i=1}^N |x_i|^\alpha\right)^{1/\alpha}$ for $0 < \alpha < 1$, and $\norm{x}_0 = |\{i : x_i \neq 0\}|$ is the sparsity.
For any $0 < \epsilon < 1$, there exists an $\epsilon$-net of $\mathbb{B}_{\alpha,k}^N$ under the $\ell_\alpha$ quasi-norm of size at most $\left(\frac{C_{\text{net}}}{\epsilon}\right)^k {N \choose k}$, with constant $C_{\text{net}}=2(2^{1/\alpha-1})^2 + 2^{1/\alpha-1}$.
\end{lemma}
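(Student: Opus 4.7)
The natural plan is a two-level covering argument: reduce to the fixed-support case, then build a net for each support using the quasi-norm volume comparison, and finally union-bound over all supports.

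First, observe that every $x \in \mathbb{B}_{\alpha,k}^N$ is supported on some index set $S \subset \{1,\dots,N\}$ with $|S| \leq k$, so $\mathbb{B}_{\alpha,k}^N = \bigcup_{|S|=k}\, \mathbb{B}_{\alpha,S}$, where $\mathbb{B}_{\alpha,S} := \{x \in \mathbb{B}_{\alpha,k}^N : \operatorname{supp}(x)\subseteq S\}$ is (isometrically) a copy of the unit $\ell_\alpha$-quasiball in $\mathbb{R}^k$. There are $\binom{N}{k}$ such supports, so it suffices to exhibit, for each fixed $S$, an $\epsilon$-net of $\mathbb{B}_{\alpha,S}$ of cardinality at most $(C_{\mathrm{net}}/\epsilon)^k$; the union of these nets over all $S$ will be the required net for $\mathbb{B}_{\alpha,k}^N$.

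For a fixed $S$ I plan to use the standard packing/volumetric argument, adapted to the quasi-norm setting. Write $K := 2^{1/\alpha - 1}$, so that $\|u+v\|_\alpha \leq K(\|u\|_\alpha + \|v\|_\alpha)$ on $\mathbb{R}^k$. Let $\mathcal{N}\subseteq \mathbb{B}_{\alpha,S}$ be a maximal $\epsilon$-separated subset with respect to $\|\cdot\|_\alpha$; by maximality it is automatically an $\epsilon$-net of $\mathbb{B}_{\alpha,S}$. Choose the packing radius $r := \epsilon/(2K)$. Two observations follow directly from the quasi-triangle inequality: (i) the balls $B(x_i, r)$ centered at the points of $\mathcal{N}$ are disjoint, since $y \in B(x_i,r) \cap B(x_j,r)$ would force $\|x_i-x_j\|_\alpha \leq K(r+r) = \epsilon$; and (ii) each $B(x_i,r)$ is contained in $B(0, K(1+r))$, since $\|y\|_\alpha \leq K(\|x_i\|_\alpha + r) \leq K(1+r)$. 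Comparing $k$-dimensional Lebesgue volumes (which are well-defined for these quasi-norm balls and scale as $r^k$ under dilations) gives
\[
|\mathcal{N}|\cdot r^{k} \;\leq\; \bigl(K(1+r)\bigr)^{k}, \qquad \text{hence}\qquad |\mathcal{N}| \;\leq\; \Bigl(\frac{K + K/r \cdot r \cdot (\ldots)}{\cdot}\Bigr)\text{ -- i.e., } \Bigl(\tfrac{K(1+r)}{r}\Bigr)^{k} = \Bigl(\tfrac{2K^2}{\epsilon} + K\Bigr)^{k}.
\]
For $0<\epsilon<1$ we have $2K^2 + K\epsilon \leq 2K^2 + K$, so $|\mathcal{N}| \leq \bigl((2K^2+K)/\epsilon\bigr)^k = (C_{\mathrm{net}}/\epsilon)^k$, as required.

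Putting the two levels together, taking the union of the per-support nets yields an $\epsilon$-net of $\mathbb{B}_{\alpha,k}^N$ of size at most $\binom{N}{k}(C_{\mathrm{net}}/\epsilon)^k$. The main subtlety, and the only nontrivial step, is justifying the volume comparison for the $\ell_\alpha$ quasi-norm ball: the ball is non-convex, so one cannot appeal to classical convex-body volumetric bounds verbatim. The argument goes through regardless because only the dilation-homogeneity of Lebesgue measure ($\operatorname{vol}(tA) = t^{k}\operatorname{vol}(A)$) and the containment in (ii) are needed -- both of which are purely algebraic consequences of the quasi-triangle inequality with constant $K = 2^{1/\alpha-1}$. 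No convexity is used, which is precisely why the constant $C_{\mathrm{net}}$ ends up depending on $\alpha$ through $K^2$ and $K$.
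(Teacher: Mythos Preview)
Your proposal is correct and follows essentially the same approach as the paper: reduce to a fixed support, take a maximal $\epsilon$-separated set, pack disjoint quasi-balls of radius $\epsilon/(2K)$ into the enlarged ball of radius $K(1+\epsilon/(2K))=K+\epsilon/2$, compare Lebesgue volumes, and finally union over the $\binom{N}{k}$ supports. The only cosmetic difference is that you present the support-union step first whereas the paper does it last; the constants and inequalities match line for line, and your remark that only dilation-homogeneity (not convexity) is needed for the volume step is exactly the point the paper relies on implicitly.
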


\begin{proof}
The proof proceeds in two main steps: first, we establish a bound for the covering number of the $k$-dimensional $\ell_\alpha$ unit ball, and then we extend this result to the $N$-dimensional sparse $\ell_\alpha$ ball using a combinatorial argument.

\textbf{Step 1: Covering the $k$-dimensional $\ell_\alpha$ Unit Ball $\mathbb{B}_\alpha^k$.}

Let $\mathbb{B}_\alpha^k = \{x \in \mathbb{R}^k : \norm{x}_\alpha \leq 1\}$. We seek to bound its $\epsilon$-covering number, denoted $\mathcal{N}(\mathbb{B}_\alpha^k, \norm{\cdot}_\alpha, \epsilon)$.

For $0 < \alpha < 1$, the function $\norm{\cdot}_\alpha$ is a quasi-norm on $\mathbb{R}^k$. This means it satisfies the following properties:
\begin{enumerate}
    \item $\norm{x}_\alpha \geq 0$, and $\norm{x}_\alpha = 0$ if and only if $x=0$.
    \item $\norm{c x}_\alpha = |c| \norm{x}_\alpha$ for any scalar $c \in \mathbb{R}$.
    \item There exists a constant $C_\alpha \geq 1$ such that for any $x, y \in \mathbb{R}^k$, $\norm{x+y}_\alpha \leq C_\alpha (\norm{x}_\alpha + \norm{y}_\alpha)$.
\end{enumerate}
For $\ell_\alpha$, specifically, the constant $C_\alpha = 2^{1/\alpha - 1}$.

For brevity, let $\mathcal{N}:= \mathcal{N}(\mathbb{B}_\alpha^k, \norm{\cdot}_\alpha, \epsilon)$. By definition of an $\epsilon$-net, there exist $\mathcal{N}$ points $\{x_1, \ldots, x_\mathcal{N}\} \subset \mathbb{B}_\alpha^k$ such that for any $x \in \mathbb{B}_\alpha^k$, there is some $x_j$ with $\norm{x - x_j}_\alpha \leq \epsilon$.

Consider the collection of open balls $\{B(x_j, \frac{\epsilon}{2C_\alpha})\}_{j=1}^\mathcal{N}$, where $B(x, r) = \{y \in \mathbb{R}^k : \norm{y-x}_\alpha < r\}$.

\textit{Sub-step 1.1: Disjointness of the balls.}
We claim that these balls are disjoint. Suppose, for contradiction, that $z \in B(x_i, \frac{\epsilon}{2C_\alpha}) \cap B(x_j, \frac{\epsilon}{2C_\alpha})$ for distinct $i, j \in \{1, \ldots, \mathcal{N}\}$.
Then $\norm{z - x_i}_\alpha < \frac{\epsilon}{2C_\alpha}$ and $\norm{z - x_j}_\alpha < \frac{\epsilon}{2C_\alpha}$.
Using the quasi-triangle inequality (property 3 above):
\[ \norm{x_i - x_j}_\alpha = \norm{(x_i - z) + (z - x_j)}_\alpha \leq C_\alpha (\norm{x_i - z}_\alpha + \norm{z - x_j}_\alpha), \] and so
\[ \norm{x_i - x_j}_\alpha < C_\alpha \left(\frac{\epsilon}{2C_\alpha} + \frac{\epsilon}{2C_\alpha}\right) = C_\alpha \left(\frac{\epsilon}{C_\alpha}\right) = \epsilon. \]
However, the points $\{x_1, \ldots, x_\mathcal{N}\}$ forming an $\epsilon$-net (specifically, a maximal $\epsilon$-separated set, whose size equals the covering number) must satisfy $\norm{x_i - x_j}_\alpha \geq \epsilon$ for $i \neq j$. This leads to a contradiction.
Therefore, the balls $\{B(x_j, \frac{\epsilon}{2C_\alpha})\}$ are disjoint.

\textit{Sub-step 1.2: Containment within a larger ball.}
Each $x_j \in \mathbb{B}_\alpha^k$, so $\norm{x_j}_\alpha \leq 1$.
Let $y \in B(x_j, \frac{\epsilon}{2C_\alpha})$. Then $\norm{y - x_j}_\alpha < \frac{\epsilon}{2C_\alpha}$.
Again, using the quasi-triangle inequality:
\begin{eqnarray*}
&&\norm{y}_\alpha = \norm{(y - x_j) + x_j}_\alpha \leq C_\alpha (\norm{y - x_j}_\alpha + \norm{x_j}_\alpha) \\
&\implies&\norm{y}_\alpha < C_\alpha \left(\frac{\epsilon}{2C_\alpha} + 1\right) = \frac{\epsilon}{2} + C_\alpha. 
\end{eqnarray*}
Thus, all $\mathcal{N}$ disjoint balls $B(x_j, \frac{\epsilon}{2C_\alpha})$ are contained within the open ball $B(0, C_\alpha + \frac{\epsilon}{2}) = \{z \in \mathbb{R}^k : \norm{z}_\alpha < C_\alpha + \frac{\epsilon}{2}\}$.

\textit{Sub-step 1.3: Volume comparison.}
Let $\text{Vol}(S)$ denote the Lebesgue measure (volume) of a set $S \subset \mathbb{R}^k$.
For an $\ell_\alpha$ ball centered at the origin, its volume is proportional to the $k$-th power of its radius: $\text{Vol}(B(0, r)) = r^k \cdot \text{Vol}(B(0, 1))$.
Since the $\mathcal{N}$ disjoint balls $B(x_j, \frac{\epsilon}{2C_\alpha})$ are contained in $B(0, C_\alpha + \frac{\epsilon}{2})$, their combined volume must be less than or equal to the volume of the containing ball:
\begin{eqnarray*}
&&\sum_{j=1}^\mathcal{N} \text{Vol}\left(B\left(x_j, \frac{\epsilon}{2C_\alpha}\right)\right) \leq \text{Vol}\left(B\left(0, C_\alpha + \frac{\epsilon}{2}\right)\right) \\
&\implies& \mathcal{N} \cdot \left(\frac{\epsilon}{2C_\alpha}\right)^k \text{Vol}(B(0, 1)) \leq \left(C_\alpha + \frac{\epsilon}{2}\right)^k \text{Vol}(B(0, 1)). 
\end{eqnarray*}
Since $\text{Vol}(B(0, 1)) > 0$,
\[ \mathcal{N} \cdot \left(\frac{\epsilon}{2C_\alpha}\right)^k \leq \left(C_\alpha + \frac{\epsilon}{2}\right)^k \implies \mathcal{N} \leq \left(\frac{C_\alpha + \frac{\epsilon}{2}}{\frac{\epsilon}{2C_\alpha}}\right)^k = \left(\frac{2C_\alpha^2 + C_\alpha\epsilon}{\epsilon}\right)^k = \left(\frac{2C_\alpha^2}{\epsilon} + C_\alpha\right)^k. \]
Since $0 < \epsilon < 1$, we have $C_\alpha < \frac{C_\alpha}{\epsilon}$. Therefore, $C_\alpha + \frac{2C_\alpha^2}{\epsilon} \le \frac{2C_\alpha^2 + C_\alpha}{\epsilon}$.
Let $C'_\alpha = 2C_\alpha^2 + C_\alpha$. 
Thus, we have bounded the covering number of $\mathbb{B}_\alpha^k$:
\[ \mathcal{N}(\mathbb{B}_\alpha^k, \norm{\cdot}_\alpha, \epsilon) \leq \left(\frac{C'_\alpha}{\epsilon}\right)^k. \]

\textbf{Step 2: Extending to the Sparse $\ell_\alpha$ Ball $\mathbb{B}_{\alpha,k}^N$.}

The set $\mathbb{B}_{\alpha,k}^N$ contains all vectors in $\mathbb{R}^N$ with at most $k$ non-zero entries and $\ell_\alpha$-norm at most 1. We can partition this set based on the support of its vectors.
For any subset of indices $T \subseteq \{1,2,\dots, N\}$ with $|T|=k$, let $S_T = \{x \in \mathbb{R}^N : \text{supp}(x) \subseteq T, \norm{x}_\alpha \leq 1\}$.
Then $\mathbb{B}_{\alpha,k}^N = \bigcup_{|T|=k} S_T$.
The number of such support sets $T$ is given by the binomial coefficient ${N \choose k}$.

For each fixed support $T$, the problem of finding an $\epsilon$-net for $S_T$ is equivalent to finding an $\epsilon$-net for the $k$-dimensional $\ell_\alpha$ unit ball $\mathbb{B}_\alpha^k$. This is because any vector $x \in S_T$ can be uniquely identified with a vector in $\mathbb{R}^k$ by restricting to its non-zero coordinates and ignoring their original positions in $\mathbb{R}^N$. This identification preserves the $\ell_\alpha$ quasi-norm.
Therefore, from Step 1:
\[ \mathcal{N}(S_T, \norm{\cdot}_\alpha, \epsilon) \leq \mathcal{N}(\mathbb{B}_\alpha^k, \norm{\cdot}_\alpha, \epsilon) \leq \left(\frac{C'_\alpha}{\epsilon}\right)^k. \]

To obtain an $\epsilon$-net for the entire $\mathbb{B}_{\alpha,k}^N$, we can simply take the union of the $\epsilon$-nets constructed for each $S_T$. The size of this combined net will be at most the sum of the sizes of the individual nets:
\[ \mathcal{N}(\mathbb{B}_{\alpha,k}^N, \norm{\cdot}_\alpha, \epsilon) \leq \sum_{|T|=k} \mathcal{N}(S_T, \norm{\cdot}_\alpha, \epsilon), \] which gives
\[ \mathcal{N}(\mathbb{B}_{\alpha,k}^N, \norm{\cdot}_\alpha, \epsilon) \leq {N \choose k} \cdot \left(\frac{C'_\alpha}{\epsilon}\right)^k. \]

Let $C_{\text{net}}:= C'_\alpha = 2(2^{1/\alpha-1})^2 + 2^{1/\alpha-1}$. This constant $C_{\text{net}}$ depends only on $\alpha$ and is positive.
Thus, the lemma is proven.
\end{proof}

\begin{remark}
The constant $C_{\text{net}} = 2(2^{1/\alpha-1})^2 + 2^{1/\alpha-1}$ can be quite large as $\alpha \to 0^+$. More refined bounds often involve tighter volume estimates or specific constructions that may be more complex for quasi-normed spaces. 
\end{remark}

\section{Proof of RQIP Theorem}
With the background work above, we state and prove:
\begin{theorem}[RQIP Theorem]
Let $\Omega \in \mathbb{R}^{M \times N}$ be a random matrix whose entries $\omega_{ij}$ are i.i.d. symmetric $\alpha$-stable random variables, denoted $S\alpha S(\gamma)$, with $\alpha \in (0,1)$ and scale parameter $\gamma > 0$. For any $p \in (0, \alpha)$ and any specified $k \in \{1, \dots, N\}$, for any target $\delta \in (0,1)$ and $\eta \in (0,1)$, and with constants $C_\text{con}, c_\text{con}$ as in Lemma \ref{lem:concentration} and $C_\text{net}$ as in Lemma \ref{lem:covering}, if the number of rows $M$ satisfies
\[M \geq \left( \frac{2 C_{\text{con}}}{\eta} C_{\text{net}}^k \left(\frac{3}{\delta}\right)^{k/p} \left(\frac{eN}{k}\right)^k \right)^{1/c_{\text{con}}}, \]
then $\Omega$ satisfies the $(\delta, k)$-RQIP with probability at least $1-\eta$. 
\end{theorem}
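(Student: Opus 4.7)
The plan is to combine the concentration inequality of Lemma~\ref{lem:concentration} with the covering bound of Lemma~\ref{lem:covering} via a standard $\varepsilon$-net argument, handling the discretization-to-continuum step by exploiting the $p$-subadditivity $|a+b|^p \le |a|^p + |b|^p$ that holds for $0 < p < 1$. The first move is a homogeneity reduction: both sides of the RQIP inequality scale as $\|x\|_\alpha^p$, so it suffices to prove, for every $k$-sparse unit vector $x$ (i.e., $x \in \mathbb{B}_{\alpha,k}^N$ with $\|x\|_\alpha = 1$), the deviation bound $|Q(x) - C_{\alpha,p}\gamma^p| \le \delta\, C_{\alpha,p}\gamma^p$, where $Q(x) := \frac{1}{M}\sum_{i=1}^M |\omega_i^T x|^p$.

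Next I would pick $\varepsilon$ so that $\varepsilon^p = \delta/3$ and invoke Lemma~\ref{lem:covering} to obtain an $\varepsilon$-net $\mathcal{F} \subset \mathbb{B}_{\alpha,k}^N$ of size at most $\binom{N}{k}(C_{\text{net}}/\varepsilon)^k \le (eN/k)^k C_{\text{net}}^k (3/\delta)^{k/p}$. Because the construction in Lemma~\ref{lem:covering} proceeds support-by-support, for any $k$-sparse $x$ one may select an approximant $x_j \in \mathcal{F}$ sharing the same support set, so that $x - x_j$ is again $k$-sparse with $\|x - x_j\|_\alpha \le \varepsilon$. For each net point, Lemma~\ref{lem:concentration} applied with deviation $\tilde\varepsilon := (\delta/3)\,C_{\alpha,p}\gamma^p$ gives a failure probability of at most $C_{\text{con}} M^{-c_{\text{con}}}$ (provided $M > M_0$); a union bound over $\mathcal{F}$ then controls the good event $\mathcal{E} = \{\max_{x_j \in \mathcal{F}}|Q(x_j) - C_{\alpha,p}(\gamma\|x_j\|_\alpha)^p| \le \tilde\varepsilon\}$ with
\[
\Pbb(\mathcal{E}^c) \le |\mathcal{F}| \cdot C_{\text{con}} M^{-c_{\text{con}}}.
\]

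To pass from $\mathcal{F}$ to all of $\mathbb{B}_{\alpha,k}^N$, I would bootstrap on the supremum $A := \sup_{x}|Q(x) - C_{\alpha,p}\gamma^p|$ taken over $k$-sparse unit vectors. Given $x$ and its net approximant $x_j$, $p$-subadditivity yields $|Q(x) - Q(x_j)| \le Q(x - x_j)$, and $p$-homogeneity gives $Q(x-x_j) \le \|x-x_j\|_\alpha^p \cdot S \le \varepsilon^p S$, where $S := \sup_v Q(v)$. Since $S \le A + C_{\alpha,p}\gamma^p$, on $\mathcal{E}$ one obtains $A \le \tilde\varepsilon + \varepsilon^p(A + C_{\alpha,p}\gamma^p)$, which (since $\varepsilon^p = \delta/3 < 1$) rearranges via a short calculation to $A \le \delta C_{\alpha,p}\gamma^p$, i.e., RQIP. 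Requiring $\Pbb(\mathcal{E}^c) \le \eta$ and inverting then gives the stated lower bound on $M$, with the prefactor $2$ absorbing slack between $\tilde\varepsilon$ and $\delta C_{\alpha,p}\gamma^p$ and the use of $\binom{N}{k} \le (eN/k)^k$.

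The main obstacle is the bootstrap step: in contrast to the sub-Gaussian Euclidean setting, $Q$ is neither a norm nor a convex functional, so one cannot transfer the deviation from the net to the ball by a Lipschitz argument. The $p$-subadditivity of $|\cdot|^p$ is the natural substitute, but its effectiveness is limited to $\varepsilon^p < 1$, which forces the choice $\varepsilon^p \asymp \delta$; this drives the $(3/\delta)^{k/p}$ factor in the covering number and, combined with the merely polynomial decay of the concentration estimate, produces the characteristic exponentially-in-$k$ sample complexity of the ultra-heavy-tailed regime.
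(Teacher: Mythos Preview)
Your proposal is correct and, in fact, takes a cleaner route than the paper's own argument. Both proofs begin identically: normalize to $\|x\|_\alpha = 1$, set $\varepsilon = (\delta/3)^{1/p}$, take an $\varepsilon$-net of size $\binom{N}{k}(C_{\text{net}}/\varepsilon)^k$, and apply Lemma~\ref{lem:concentration} with a union bound over the net. The divergence is in the extension from the net to all sparse vectors.

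The paper handles the residual $z = x - x_0$ by a \emph{second} invocation of Lemma~\ref{lem:concentration}, applied to that specific $z$; this is the source of the split $\eta_1 = \eta_2 = \eta/2$ and hence the prefactor $2$ in the bound on $M$. You instead close the argument deterministically on the good event $\mathcal{E}$ via the self-bounding inequality $A \le \tilde\varepsilon + \varepsilon^p(A + C_{\alpha,p}\gamma^p)$ for the supremum $A$, which solves directly to $A \le \delta\, C_{\alpha,p}\gamma^p$. This buys two things. First, the bootstrap gives a genuinely uniform bound over all $k$-sparse unit $x$ once $\mathcal{E}$ holds, whereas the paper's pointwise concentration on a single residual $z$ does not by itself deliver uniformity (since $z$ varies with $x$). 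Second, your observation that the Lemma~\ref{lem:covering} net is built support-by-support lets you choose $x_j$ with the same support as $x$, so $x - x_j$ remains $k$-sparse and the bootstrap closes over the original sparsity class; the paper instead only notes that $z$ is $2k$-sparse. Your argument actually requires no factor $2$ in the lower bound on $M$, so the theorem as stated follows a fortiori.
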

\begin{proof}
\textbf{Step 1: Concentration for a Fixed Sparse Vector}\\
Let $x$ be a fixed $k$-sparse vector with $\norm{x}_\alpha = 1$. This normalization does not affect the property, as the RQIP condition is homogeneous with respect to $\norm{x}_\alpha$.
Define $y_i = \omega_i^T x$ for $i=1, \dots, M$. Since $\omega_{ij} \sim S\alpha S(\gamma)$, each $y_i$ is an $S\alpha S(\gamma \norm{x}_\alpha)$ random variable. With $\norm{x}_\alpha = 1$, we have $y_i \sim S\alpha S(\gamma)$. The $p$-th absolute moment of $y_i$ is $\mathbb{E}[|y_i|^p] = C_{\alpha,p} (\gamma \norm{x}_\alpha)^p = C_{\alpha,p} \gamma^p$.
Let $S_x = \sum_{i=1}^M |\omega_i^T x|^p = \norm{\Omega x}_p^p$. By Lemma \ref{lem:concentration}, for a fixed $x$, the probability of deviation from the mean is bounded for all large $M>M_0(\varepsilon',\alpha,p,\gamma)$:
\[ \mathbb{P}\left( \left| \frac{1}{M} S_x - C_{\alpha,p} \gamma^p \right| > \varepsilon' \right) \leq C_{\text{con}} \cdot M^{-c_{\text{con}}}. \]
To satisfy the RQIP for this fixed $x$, we need the deviation to be bounded by $\delta_A C_{\alpha,p} \gamma^p$. Thus, we set $\varepsilon' = \delta_A C_{\alpha,p} \gamma^p$.
The probability that RQIP fails for a fixed $x$ is:
\[ \mathbb{P}\left( \left| \frac{1}{M} \norm{\Omega x}_p^p - C_{\alpha,p} \gamma^p \right| > \delta_A C_{\alpha,p} \gamma^p \right) \leq C_{\text{con}} \cdot M^{-c_{\text{con}}}. \]

\textbf{Step 2: Extension to an $\epsilon$-Net}\\
To ensure the RQIP holds for all $k$-sparse vectors, we employ an $\epsilon$-net argument. Let $\mathbb{B}_{\alpha,k}^N(1) := \{x \in \mathbb{R}^N : \norm{x}_\alpha = 1, \norm{x}_0 \le k\}$.
By Lemma \ref{lem:covering}, for a chosen $\epsilon > 0$, there exists an $\epsilon$-net $\mathcal{N}$ for $\mathbb{B}_{\alpha,k}^N(1)$ with cardinality bounded by:
\[ |\mathcal{N}| \leq \left(\frac{C_{\text{net}}}{\epsilon}\right)^k {N \choose k}. \]
We apply the union bound over all points in this net. The probability that RQIP fails for at least one $x_0 \in \mathcal{N}$ is:
\begin{align*}
\mathbb{P}(\exists x_0 \in \mathcal{N} : \text{RQIP fails for } x_0) &\leq \sum_{x_0 \in \mathcal{N}} \mathbb{P}\left( \left| \frac{1}{M} \norm{\Omega x_0}_p^p - C_{\alpha,p} \gamma^p \right| > \delta_A C_{\alpha,p} \gamma^p \right) \\
&\leq |\mathcal{N}| \cdot C_{\text{con}} \cdot M^{-c_{\text{con}}} \\
&\leq \left(\frac{C_{\text{net}}}{\epsilon}\right)^k {N \choose k} \cdot C_{\text{con}} \cdot M^{-c_{\text{con}}}.
\end{align*}
Let this probability be $\eta_1$. For RQIP to hold for all $x_0 \in \mathcal{N}$ with high probability (e.g., $1-\eta_1$), we must choose $M$ such that:
\[ M \geq \left( \frac{C_{\text{con}}}{\eta_1} \left(\frac{C_{\text{net}}}{\epsilon}\right)^k {N \choose k} \right)^{1/c_{\text{con}}}. \]
Let $E_{\mathcal{N}}$ be the event that RQIP holds for all $x_0 \in \mathcal{N}$. This event occurs with probability at least $1-\eta_1$. Note that our choice of $M$ here is $>>M_0(\delta_A,\alpha,p,\gamma).$\\

\textbf{Step 3: Extension from Net to All Sparse Vectors}\\
Now, consider an arbitrary $k$-sparse vector $x \in \mathbb{R}^N$. Without loss of generality, we assume $\norm{x}_\alpha = 1$.
By the definition of an $\epsilon$-net $\mathcal{N}$ for $\mathbb{B}_{\alpha,k}^N(1)$, there exists an $x_0 \in \mathcal{N}$ such that $\norm{x - x_0}_\alpha \leq \epsilon$.
We want to bound the total deviation $\left| \frac{1}{M} \norm{\Omega x}_p^p - C_{\alpha,p} \gamma^p \right|$.
Using the triangle inequality:
\[ \left| \frac{1}{M} \norm{\Omega x}_p^p - C_{\alpha,p} \gamma^p \right| \le \left| \frac{1}{M} \norm{\Omega x}_p^p - \frac{1}{M} \norm{\Omega x_0}_p^p \right| + \left| \frac{1}{M} \norm{\Omega x_0}_p^p - C_{\alpha,p} \gamma^p \right|. \]
On the event $E_{\mathcal{N}}$, the second term on the right hand side is bounded by $\delta_A C_{\alpha,p} \gamma^p$.

For the first term, let $z = x - x_0$. Since $x$ is $k$-sparse and $x_0$ is $k$-sparse, $z$ is $2k$-sparse. More importantly for us, $\norm{z}_\alpha \leq \epsilon$.
For $p \in (0,1)$, we have the quasi-triangle inequality: $\abs{a+b}^p \le \abs{a}^p + \abs{b}^p$ for $a,b \in \mathbb{R}$.
Applying this elementwise, we obtain: $\left| \norm{\Omega x}_p^p - \norm{\Omega x_0}_p^p \right| \le \norm{\Omega z}_p^p$.
So, we need to bound $\frac{1}{M} \norm{\Omega z}_p^p$. \\
We have:
\begin{equation}
\label{eq:inter}
\left| \frac{1}{M} \norm{\Omega x}_p^p - C_{\alpha,p} \gamma^p \right| \le \left| \frac{1}{M} \norm{\Omega x_0}_p^p - C_{\alpha,p} \gamma^p \right| + \frac{1}{M} \norm{\Omega z}_p^p.
\end{equation}
Starting from an arbitrary $x$, we get a specific $z=x-x_0$ for that $x$ using the net $\mathcal{N}$. For this particular $z$, Lemma \ref{lem:concentration} states that, for $M > M_0(\delta_A,\alpha,p,\gamma)$:
\[ \mathbb{P}\left( \left| \frac{1}{M} \norm{\Omega z}_p^p - C_{\alpha,p} (\gamma \norm{z}_\alpha)^p \right| > \delta_A C_{\alpha,p} \gamma^p \right) \leq C_{\text{con}} \cdot M^{-c_{\text{con}}}. \]
Since $\norm{z}_\alpha \le \epsilon$, we can bound the expectation: $C_{\alpha,p} (\gamma \norm{z}_\alpha)^p \le C_{\alpha,p} (\gamma \epsilon)^p$.
Combining these:
\[ \mathbb{P}\left( \frac{1}{M} \norm{\Omega z}_p^p > C_{\alpha,p} (\gamma \epsilon)^p + \delta_A C_{\alpha,p} \gamma^p \right) \leq C_{\text{con}} \cdot M^{-c_{\text{con}}}. \]
Let $\eta_2 = C_{\text{con}} \cdot M^{-c_{\text{con}}}$.\\
The total probability of failure for the entire argument is at most $\eta = \eta_1 + \eta_2$. Note that, since $M$ is chosen to satisfy the bounds for $\eta_1,$ the simpler condition for $\eta_2$ is satisfied.
With probability $1-\eta_2$, it holds for this specific $z$ that
$$ \frac{1}{M} \norm{\Omega z}_p^p \le C_{\alpha,p} (\gamma \epsilon)^p + \delta_A C_{\alpha,p} \gamma^p. $$
With the choice $C_{\alpha,p} (\gamma \epsilon)^p = \delta_A C_{\alpha,p} \gamma^p$, we obtain:
\[ \frac{1}{M} \norm{\Omega z}_p^p \le 2\delta_A C_{\alpha,p} \gamma^p. \]
If $M$ is chosen to satisfy the condition from the first net, that is, on the event $E_{\mathcal{N}}$, we have from \eqref{eq:inter} that, for any arbitrary $k$-sparse $x$,
\[
\left| \frac{1}{M} \norm{\Omega x}_p^p - C_{\alpha,p} \gamma^p \right| \le 3\delta_A C_{\alpha,p} \gamma^p.\] 
\textbf{Step 4: Determining Sample Complexity $M$}\\
The required number of measurements $M$ must satisfy two conditions, one from the net argument, and one for the final residual. Recall again that the choice of $M$ for $\eta_2 = C_{\text{con}} \cdot M^{-c_{\text{con}}}$, is subsumed by the larger $M$ driven by the net bound. We set $\eta_1=\eta_2=\eta/2, \delta_A=\delta/3$, and $\epsilon=\delta_A^{1/p}=(\delta/3)^{1/p}$, so as to satisfy the confidence and RQIP deviation requirements.\\ 
Substituting $\epsilon = (\frac{\delta}{3})^{1/p}$, we get that as long as
\[
M \geq \left( \frac{C_{\text{con}}}{\eta/2} \left(\frac{C_{\text{net}}}{(\frac{\delta}{3})^{1/p}}\right)^k {N \choose k} \right)^{1/c_{\text{con}}} = \left( \frac{2 C_{\text{con}}}{\eta} C_{\text{net}}^k \left(\frac{3}{\delta}\right)^{k/p} {N \choose k} \right)^{1/c_{\text{con}}},\]
the random matrix $\Omega$ satisfies the $(\delta, k)$-RQIP with probability at least $1-\eta$.
The exact claim of the theorem follows on recalling the elementary inequality
$
{N\choose k}\leq (\frac{eN}{k})^k.
$
\end{proof}

Note that the sample complexity depends on the relative magnitudes of $c_{\text{con}}$ and $k$. For CS applications, the goal is to have $M \ll N$. If $c_{\text{con}}$ is sufficiently small in relation to $k$, terms of the form $(\frac{N}{k})^{k/c_{\text{con}}}$ will lead to $M$ being comparable to or even greater than $N$, thereby undermining the purpose of CS. To ensure $M < N$, it would ideally be desirable for $c_{\text{con}}$ to be large enough relative to $k$ or a multiple of it. This implies a need for $\alpha/p$ to be large which, for a fixed $\alpha \in (0,1)$, suggests $p$ could be very small (close to $0$). However, choosing $p$ too small might introduce numerical instabilities or practical limitations in recovery.

Comparing this result to common RIP bounds for sub-Gaussian matrices \cite{chen2024} or to that obtained in \cite{otero2011generalized} for $S\alpha S$ random matrices with $1\leq\alpha\leq 2$, which scale as $O(k \log(N/k))$, the dependency here involves a polynomial factor in $N$. While this seems pessimistic, it may be inherently linked to the heavy-tailed nature of $\alpha$-stable distributions with $\alpha < 1$. Thus, it is likely that the dependence on $N$ cannot be improved within the RIP framework for this class of matrices, unless an improved Fuk-Nagaev type concentration holds.

Nevertheless, this theoretical result is significant because it addresses a previously less explored domain of random matrices with heavy-tailed, $S\alpha S$ entries ($\alpha<1$), which are relevant for signals corrupted by impulsive noise or extreme events. The existence of such a rigorous proof of RQIP, particularly given the challenges posed by quasinorms ($0 < \alpha < 1$) and heavy tails, is a notable contribution. It highlights the complexities and potential limitations of directly extending the RIP framework to these situations, suggesting that alternative recovery guarantees, such as those based on the NSP framework, might be necessary for practical CS with extremely heavy-tailed noise.

\providecommand{\bysame}{\leavevmode\hbox to3em{\hrulefill}\thinspace}
\providecommand{\MR}{\relax\ifhmode\unskip\space\fi MR }
\providecommand{\MRhref}[2]{%
  \href{http://www.ams.org/mathscinet-getitem?mr=#1}{#2}
}
\providecommand{\href}[2]{#2}

\begin{acks}
We are grateful to Robert Adler, Yogeshwaran Dhandapani, and Manjunath Krishnapur, for advice and encouragement.
\end{acks}


\end{document}